\theoremstyle{plain}
\newtheorem{theorem}{Theorem}[section]
\newtheorem{lemma}[theorem]{Lemma}
\newtheorem{prop}[theorem]{Proposition}
\newtheorem{corollary}[theorem]{Corollary}
\theoremstyle{definition}
\newtheorem{example}[theorem]{Example}
\def\es{\emptyset}
\def\R{\mathbb{R}}
\def\C{\mathbb{C}}
\def\N{\mathbb{N}}
\def\VE{V(E)}
\def\intt{\int \limits_{-\infty}^{\infty}}
\def\cl{{\rm cl}}
\def\L{\mathcal{L}}
\def\D{\mathcal{D}}
\def\ep{\varepsilon}
\newcommand{\supp}{{\rm supp}\,}
\newcommand{\lin}{{\rm span}\,}
\title{\bf On density of shift-invariant subspaces of some inductive limit spaces}
\author{J\'ozef Burzyk}
\date{}
\begin{document}
\maketitle

\begin{abstract}
Necessary and sufficient conditions are given for density of shift-invariant subspaces of the space
$\mathcal{L}$ of integrable functions of bounded support with the inductive limit topology.
\end{abstract}

\noindent\emph{Key words:} Shift invariant subspaces, Mikusi\'{n}ski operators, Fourier transform.

\noindent\emph{2010 Mathematics Subject Classification:}  Primary: 46E30; Secondary: 44A40, 47A15, 54D55.

\section{Introduction}
A space $E$ of functions defined on the real line $\R$ is shift-invariant if $f \in E$ implies
$\tau_{\lambda}f \in E \ $ for all $\lambda \in \R$, where $\tau_{\lambda} f(t)=f(t-\lambda)$. A theorem of Wiener (see \cite{Wiener}) says that a shift invariant subspace is related to density of shift-invariant subspaces of the space of integrable functions on the real line. In this paper we are concerned with density of shift-invariant subspaces of the space of integrable functions of bounded supports equipped with the natural inductive limit topology.

The proofs of the main theorems (Theorems 1 and 2) are  based on Corollary 1 in \cite{BUR2} and involve the Fourier transforms of regular Mikusi\'{n}ski operators of bounded support. Using Theorems 1 and 2 we obtain characterizations of the so-called total sets as well as of Mikusi\'nski operators whose denominators are of bounded support. Then we discuss possible generalizations of Theorems 1 and 2 and give an example of a topologically dense shift-invariant subspace of $\L$ which is not sequentially dense.

Now we need to introduce some notation. By $\mathcal{L}$ we denote the space of integrable functions on the real line $\R$ with the bounded support. For a function $\varphi \in \mathcal{L}$ by $\supp(\varphi)$
we denote the support of $\varphi$, i.e., the smallest closed set $S$ such that $\int_{\mathbb{R} \setminus S}|\varphi(t)|dt =0$.

For a $K \subset \R$ by $\mathcal{L}_K$ we denote the set of
all functions $\varphi \in \mathcal{L}$ such that $\supp(\varphi) \subset K$ and for any positive
$\alpha$ by $\mathcal{L}_{\alpha}$ we denote the set $\mathcal{L}_{[-\alpha, \alpha]}$.

For a function $\varphi \in \mathcal{L}$ we denote $\|\varphi\|=\displaystyle \int_{\R} |\varphi(t)|\,
dt$ and we equip $\mathcal{L}$ with the strict inductive limit topology of the Banach spaces
$(\mathcal{L}_n,\|\cdot\|)$, $n \in \N$.

For  $A\subset \L$, by $\lambda(A) $ we denote the sequential closure of $A$ (i.e., $\varphi \in
\lambda(A)$ if and only if $\varphi_n \to \varphi$ in $\mathcal{L}$ for some $\varphi_n \in A$), and
by $\cl(A)$ the topological closure of $A$.

If $\varphi \in \L$, then the Fourier transform
$$\hat{\varphi}(z)=\intt e^{izt} \varphi(t)dt$$
is an entire functions. We let
$$Z(\varphi)=\{ z \in \C: \ \widehat{\varphi}(z)=0 \}$$
and
$$V(\varphi) = \{ (z,n) \in \C \times \N: \widehat{\varphi}^{(k)}(z)=0
\mbox{ for }  k=0,1,\dots ,n \}.$$
For $E\subset \mathcal{L}$ we define
$$Z(E) = \bigcap_{\varphi \in E}  Z(\varphi)\quad \mbox{and} \quad V(E) = \bigcap_{\varphi \in E} V(\varphi).$$

A sequence $(\delta_n)$ of integrable functions on $\R$ is said to be a weak delta sequence if the
following conditions hold:

\begin{itemize}
\item [(i)]  $\supp ({\delta_n}) \subset [-\alpha,\alpha] \ $ for some  $\alpha>0$ and all $\ n \in \N$;
\item [(ii)] $\displaystyle \lim_{n\to\infty} \int_{\R}\delta_n(t)\, dt = 1 $;
\item [(iii)] $\displaystyle \int_{\R}|\delta_n(t)|\, dt < M $ for  some $M < \infty$ and all $ n \in \N$;
\item [(iv)] $\displaystyle \lim_{n\to\infty} \int_{|t|>\ep}|\delta_n(t)|\,dt = 0$ for every $\ep > 0$.
\end{itemize}

These delta sequences are called weak to distinguish them from standard delta sequences, for which
conditions (i) and (iv) are replaced by the condition $\supp (\delta_n) \subset [- {\ep}_n,{\ep}_n]$
with ${\ep}_n \to 0$.

It can be proved that a sequence $(\delta_n)$ is a weak delta sequence if and only if for each
locally integrable function $f$ and for each $\alpha>0$ we have:
$$
\lim_{n \to \infty}\int_{-\alpha}^{\alpha}|f*\delta_n(t)-f(t)|dt=0.
$$

By $L_+$ we denote the ring of locally integrable on the real line vanishing to the left of some
point of the real line. The ring operations in $L_+$ are the usual addition and the convolution as
the product operation. By the Titchmarsh theorem $L_+$ is a ring without zero divisors. The field
of Mikusi\'nski operators is defined as a field of quotients of this ring (see \cite{MIK}).

For a function $\varphi \in L_+$ we define
$$
\Lambda(\varphi)=\sup\{\lambda:\ f=0 \ \textrm{a.e. on } \ (-\infty, \lambda)\}
$$
and additionally, if $\varphi \in \mathcal{L}$, then
$$
\Gamma(\varphi)=\inf\{\lambda:\ f=0 \ \textrm{a.e. on } \ (\lambda, \infty).\}
$$
By the Titchmarsh theorem
\begin{equation}
\label{TITL} \Lambda(\varphi*\psi)=\Lambda(\varphi)+\Lambda(\psi) \quad \textrm{for any} \quad
\varphi, \ \psi \in L_+
\end{equation}
and consequently
\begin{equation}
\label{TITG} \Gamma(\varphi*\psi)=\Gamma(\varphi)+\Gamma(\psi) \quad \textrm{for any} \quad
\varphi, \ \psi \in \mathcal{L}.
\end{equation}

If $\xi=\displaystyle \frac {\varphi} {\psi}$ is a Mikusi\'nski operator, we define
$$
\Lambda(\xi) = \Lambda(\varphi)-\Lambda(\psi).
$$
If, additionally, an operator $\xi$ have a representation $\xi=\displaystyle \frac {\varphi} {\psi}$,
where $\varphi, \psi \in \mathcal{L}$, then
$$
\Gamma(\xi)=\Gamma(\varphi)-\Gamma(\psi).
$$
By (\ref{TITL}) and (\ref{TITG}) both definitions are correct.

For a locally integrable function $\varphi$ on $\R$ we define the function
$\widetilde{\varphi}(t)=\varphi(-t)$, and for any $\lambda \in \R$ let $\tau_{\lambda}\varphi(t)=
\varphi(t-\lambda)$. If $\xi=\displaystyle \frac {\varphi} {\psi}$  is a Mikusi\'nski operator, then
$\tau_{\lambda}\xi=\displaystyle \frac {\tau_{\lambda}\varphi} {\psi}$ for any $\lambda \in \R$.

By $s$ we denote the differential operator, that is, $s=\displaystyle \frac 1 l$, where $l$ is the
characteristic function of $[0, \infty)$. For each complex number $\alpha$ we have $\frac 1 {s-\alpha}=e_{\alpha}$,
where $e_{\alpha}$ is the function
$$
e_{\alpha}(t)=
\begin{cases}
e^{\alpha t} & \text{if } t \geq 0,\\
0 & \text{if } t<0.
\end{cases}
$$
We will use the following proposition.

\begin{prop}
\label{C1} Suppose $\varphi \in \L_{[a,b]}$ and $(z_0,k) \in V(\varphi)$ and let 
\begin{equation}
\label{WND} \psi_k = (-1)^k i^k \frac{\varphi}{(s+iz_0)^k}.
\end{equation}
Then, $\psi_k \in \L_{[a,b]}$ and $\widehat{\psi}_k(z) = \frac{\widehat{\varphi}(z)}{(z-z_0)^k}$.
\end{prop}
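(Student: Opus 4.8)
The plan is to strip the factors $(s+iz_0)^{-1}$ off one at a time and induct on the exponent. Since $\frac1{s-\alpha}=e_\alpha$, dividing by $s+iz_0$ in the Mikusi\'nski field is convolution with the function $e_{-iz_0}\in L_+$, so writing $\psi_j:=(-1)^j i^j\varphi/(s+iz_0)^j$ we have $\psi_{j+1}=-i\,(\psi_j*e_{-iz_0})$, and
$$(\psi_j*e_{-iz_0})(t)=e^{-iz_0t}\int_{-\infty}^{t}\psi_j(u)\,e^{iz_0u}\,du .$$
From this formula the left endpoint of the support is clearly inherited at each step; the substance of the proposition is that the right endpoint is inherited too, and this is exactly where the vanishing conditions coming from $(z_0,k)\in V(\varphi)$ enter.

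The core is therefore the following building block: if $g\in\L_{[a,b]}$ and $\widehat g(z_0)=0$, then $h:=-i\,(g*e_{-iz_0})$ lies in $\L_{[a,b]}$ and $\widehat h(z)=\widehat g(z)/(z-z_0)$. For the support, note that for $t>b$ the integral $\int_{-\infty}^{t}g(u)e^{iz_0u}\,du$ equals $\widehat g(z_0)=0$, while for $t<a$ it vanishes trivially, and $h$ is bounded on $[a,b]$ because $g\in L^1$ and $e_{-iz_0}$ is bounded on compacta; hence $h\in\L_{[a,b]}$. For the transform, put $G(t)=\int_{-\infty}^{t}g(u)e^{iz_0u}\,du$, an absolutely continuous function supported in $[a,b]$ with $G(a)=G(b)=0$ and $G'(t)=g(t)e^{iz_0t}$, so $h(t)=-i\,e^{-iz_0t}G(t)$ and one integration by parts gives
$$\widehat h(z)=-i\int_a^b e^{i(z-z_0)t}G(t)\,dt=\frac1{z-z_0}\int_a^b e^{izt}g(t)\,dt=\frac{\widehat g(z)}{z-z_0},$$
the boundary terms dropping out since $G(a)=G(b)=0$. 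A priori this holds for $z\neq z_0$, but $\widehat h$ is entire ($h\in\L$) and $\widehat g(z)/(z-z_0)$ is entire ($\widehat g(z_0)=0$), so the identity holds for all $z$.

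With the building block in hand, the proposition follows by induction on $j=0,1,\dots,k$ with hypothesis ``$\psi_j\in\L_{[a,b]}$ and $\widehat{\psi_j}(z)=\widehat\varphi(z)/(z-z_0)^j$''. The case $j=0$ is immediate. For $j\to j+1$ with $j<k$, the inductive hypothesis gives $\widehat{\psi_j}(z_0)=\widehat\varphi^{(j)}(z_0)/j!=0$, since $(z_0,k)\in V(\varphi)$ forces $\widehat\varphi^{(j)}(z_0)=0$ for all $j\le k$; applying the building block with $g=\psi_j$ yields $\psi_{j+1}=-i(\psi_j*e_{-iz_0})\in\L_{[a,b]}$ with $\widehat{\psi_{j+1}}(z)=\widehat{\psi_j}(z)/(z-z_0)=\widehat\varphi(z)/(z-z_0)^{j+1}$. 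Taking $j=k$ completes the proof.

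The only point requiring care --- rather than a genuine obstacle --- is that $e_{-iz_0}\notin L^1(\R)$, so the Fourier convolution theorem is not directly available; one must first establish that $\psi_{j+1}$ is an honest element of $\L_{[a,b]}$ and only then compute $\widehat{\psi_{j+1}}$ by the integration-by-parts identity above.
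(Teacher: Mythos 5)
Your proof is correct and takes essentially the same route as the paper: the paper identifies $1/(s+iz_0)^k$ with the explicit function $g_k(x)=\frac{x^{k-1}}{(k-1)!}e^{-iz_0x}$ and says the result follows by induction, which is precisely your peeling off of one factor $1/(s+iz_0)=e_{-iz_0}$ at a time. You merely supply the details (support argument via $\widehat{g}(z_0)=0$ and the integration-by-parts computation of the transform) that the paper's one-line proof leaves implicit.
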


\begin{proof}
Since $\frac{1}{(s+iz_0)^k} = g_k$, where $g_k(x) = \frac{x^{k-1}}{(k-1)!} \, e^{-iz_0x}$
for $x \ge 0$, the proof follows by induction. 
\end{proof}

Following Boehme \cite{BOM}, an operator $x$ is said to be
regular if it admits a representation
$
x=\frac {f_n} {\delta_n}
$
for all $n \in \N$, where $f_n \in \mathcal{L}_{+}$ and $(\delta_n)$
is a delta-sequence. We say that a regular operator $x$ is zero on an open set $\Omega \subset \R$ if $x$ admits a representation 
$
x=\frac {f_n} {\delta_n}
$
such that $(\delta_n)$ is a delta-sequence and $f_n \in \mathcal{L}_{+}$ is sequence that converges to $0$ uniformly on every compact subset of $\Omega$.
The support of a regular operator $x$ is the complement of the largest open set on which $x$ is zero (see \cite{BOM}). Using the famous Beurling-Malliavin theorem (see \cite{BM}) it was proved in \cite{BUR2} that an operator $x$ is a regular operator with bounded support if and only if it has a representation $x=\frac {\varphi} {\psi}$, where $\varphi, \psi \in \mathcal{L}$ and $\widehat{\varphi}(z)/\widehat{\psi}(z)$ is an entire function.

\section{Bounded interval}

\begin{theorem}
\label{l1} Suppose $E$ is a subset of $\mathcal{L}$ such that
\begin{equation}
\label{ZAP} \inf \{ { \Lambda } (\varphi): {\varphi} \in E \}=0 \quad \text{and} \quad \sup \{ {\Gamma} (\varphi):
{\varphi} \in E \}=\alpha.
\end{equation}
Let $\beta \geq \alpha$ and let
$$
F=\lin \{ {\tau}_{\lambda} {\varphi}: \varphi \in E, \ \lambda \in [0, \beta]\}.
$$
Then, for any function $\varphi \in \mathcal{L}$ such that $\supp(\varphi) \subset [0, \alpha+\beta)$, we
have
$$
\varphi \in \cl(F) \quad \text{if and only if} \quad V(E) \subset V(\varphi).
$$
\end{theorem}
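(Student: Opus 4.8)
The implication "$\varphi \in \cl(F) \Rightarrow V(E) \subset V(\varphi)$" should be the easy direction. Each $\psi \in F$ is a finite combination $\sum c_j \tau_{\lambda_j}\varphi_j$ with $\varphi_j \in E$, so $\widehat\psi(z) = \sum c_j e^{i\lambda_j z}\widehat{\varphi_j}(z)$. If $(z_0,k) \in V(E)$, then $\widehat{\varphi_j}$ vanishes to order $\ge k+1$ at $z_0$ for every $j$, hence so does $\widehat\psi$, i.e. $(z_0,k)\in V(\psi)$. So $V(E)\subset V(F)$. The point evaluations $\psi \mapsto \widehat\psi^{(m)}(z_0)$ are continuous on $\mathcal{L}$ (each $\mathcal{L}_n$ carries the $L^1$-norm and $|\widehat\psi^{(m)}(z_0)| \le \int |t|^m e^{-\Im z_0\, t}|\psi(t)|\,dt \le C_{n,z_0}\|\psi\|$ on $\mathcal{L}_n$), and $\cl(F)$ is the topological closure, so $V(F)\subset V(\cl(F))$, giving $V(E)\subset V(\varphi)$ for $\varphi\in\cl(F)$.

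For the converse I plan to invoke Corollary 1 of \cite{BUR2} (as the paper advertises) in the following form: a function $\varphi \in \mathcal{L}$ supported in a fixed bounded interval lies in $\cl(F)$ iff every continuous linear functional on $\mathcal{L}$ annihilating $F$ also annihilates $\varphi$; and such functionals, restricted to functions supported in $[0,\alpha+\beta)$, are represented by division by a suitable element — concretely, the obstruction to membership is detected by the regular Mikusiński operators of bounded support, whose quotient representations have entire Fourier-transform ratios (the Beurling--Malliavin fact quoted at the end of Section~1). So suppose $V(E)\subset V(\varphi)$; I must show no functional separates $\varphi$ from $F$. Translating a separating functional into operator language, it corresponds to some $\chi = \eta/\psi$ with $\eta,\psi\in\mathcal{L}$ and $\widehat\eta/\widehat\psi$ entire, such that $\chi * (\tau_\lambda\varphi_j)$ is "zero" (in the Boehme sense) on the relevant interval for all $j,\lambda\in[0,\beta]$, but $\chi*\varphi$ is not. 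The support/degree bookkeeping uses \eqref{TITL}, \eqref{TITG}, the hypotheses $\inf\Lambda(E)=0$, $\sup\Gamma(E)=\alpha$, and the range $\lambda\in[0,\beta]$ to confine everything inside $[0,\alpha+\beta)$, which is exactly why that interval appears.

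The heart of the argument is a multiplicity-transfer step: from the vanishing of all the convolutions $\chi*\tau_\lambda\varphi_j$ on an interval one extracts, via Fourier transform and the structure of $\widehat\chi$ as a ratio of Fourier transforms of $\mathcal{L}$-functions, that the zero set with multiplicities of $\widehat\chi$ (more precisely of its denominator data) is forced to lie in $V(E)$; then Proposition~\ref{C1} is used in the reverse direction — since $V(E)\subset V(\varphi)$, for each such zero $(z_0,k)$ the function $(-1)^k i^k \varphi/(s+iz_0)^k$ is again in $\mathcal{L}_{[0,\alpha+\beta)}$ and has Fourier transform $\widehat\varphi(z)/(z-z_0)^k$, so $\widehat\varphi$ carries all the zeros the functional "sees," which forces $\chi*\varphi$ to vanish on the same interval, contradicting separation. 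Thus $\varphi\in\cl(F)$. The main obstacle I anticipate is precisely this transfer: making rigorous that the full family $\{\chi*\tau_\lambda\varphi_j : \lambda\in[0,\beta]\}$ vanishing on the interval — rather than just $\chi*\varphi_j$ — pins down the zero divisor of the operator with its correct multiplicities (one expects the continuum of translates $\tau_\lambda$, via $e^{i\lambda z}$ factors, to kill any possibility of a nontrivial entire cofactor), and dovetailing this with the endpoint case $\beta=\alpha$ where the interval $[0,2\alpha)$ is half-open, so the boundary behaviour at $t=\alpha+\beta$ must be handled by an approximation/limiting argument rather than directly.
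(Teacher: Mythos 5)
The easy implication ($\varphi \in \cl(F) \Rightarrow V(E) \subset V(\varphi)$) is fine, but it is also the trivial half. For the converse, which is the substance of the theorem, what you offer is a plan rather than a proof, and its central step is exactly the one you yourself flag as unresolved: the ``multiplicity-transfer'' claim that the vanishing of all convolutions $\chi*\tau_\lambda\varphi_j$ pins down the zero divisor of the separating functional's operator, with multiplicities, inside $V(E)$. Nothing in your outline establishes this, and it is not even clear in what sense the Hahn--Banach functional (a bounded measurable $u$ on $[0,\alpha+\beta]$) ``is'' an operator $\eta/\psi$ with entire transform ratio whose zeros could be so located; the paper never needs, and never proves, any statement of this kind. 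Likewise Proposition~\ref{C1} is not used ``in the reverse direction'' to divide $\varphi$ by the functional's zeros. So the hard implication remains unproved.

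For comparison, the actual mechanism avoids identifying any zero set of the functional. Assuming $\varphi_0 \notin \cl(F)$ with $V(E)\subset V(\varphi_0)$, take $u$ with $\int u\varphi_0=1$ annihilating $F$. Annihilation of all $\tau_\lambda\varphi$, $\lambda\in[0,\beta]$, $\varphi\in E$, says precisely that the continuous function $u*\widetilde{\varphi}$ (supported in $[-\alpha,\alpha+\beta]$) vanishes on $[0,\beta]$, hence splits as $\tau_{-\alpha}a_1(\varphi)+\tau_\beta a_2(\varphi)$ with $\supp a_i(\varphi)\subset[0,\alpha]$. The key algebraic step missing from your proposal: from $(u*\widetilde{\varphi})*\widetilde{\psi}=(u*\widetilde{\psi})*\widetilde{\varphi}$ and a Titchmarsh $\Lambda/\Gamma$ comparison (this is where $\beta\ge\alpha$ enters) one matches the two components separately, so the Mikusi\'nski quotients $a_i(\varphi)/\widetilde{\varphi}$ do not depend on $\varphi\in E$; consequently $\xi_i=a_i(\varphi)*\widetilde{\varphi_0}/\widetilde{\varphi}$ are well defined, and Corollary 1 of \cite{BUR2} together with $V(E)\subset V(\varphi_0)$ makes them regular operators of bounded support with $u*\widetilde{\varphi_0}=\tau_{-\alpha}\xi_1+\tau_\beta\xi_2$. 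The hypotheses \eqref{ZAP} then give $\Gamma(\tau_{-\alpha}\xi_1)\le 0$ and $\Lambda(\tau_\beta\xi_2)>0$ (the strict inequality uses $\Gamma(\varphi_0)<\alpha+\beta$, i.e.\ the half-open interval, so no separate limiting argument at the endpoint is needed), forcing $u*\widetilde{\varphi_0}$ to vanish on a right neighborhood of $0$ and hence at $0$, contradicting $u*\widetilde{\varphi_0}(0)=\int u\varphi_0=1$. Your support bookkeeping intuition points in the right direction, but without the splitting of $u*\widetilde{\varphi}$, the independence-of-$\varphi$ argument, and the final contradiction at the point $0$, the proof is not there.
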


\begin{proof}
Suppose that there is a function $\varphi_0\in\mathcal{L}$ such that $\supp(\varphi_0) \subset [0, \alpha+\beta)$, $V(E) \subset V(\varphi_0)$, and $\varphi_0 \notin \cl(F)$. Then, by the Hahn-Banach theorem (see \cite{RUDIN}), there is a measurable and bounded function $u$ on $[0, \alpha+\beta]$  such that
\begin{equation}
\label{l1_2} \int_{0}^{\alpha+\beta}u(t){\varphi}_0(t)\,dt=1
\end{equation}
and
\begin{equation}
\label{l1_2b} \int_{0}^{\alpha+\beta} u(t) \varphi(t) dt=0
\end{equation}
for any $\varphi \in \cl(F)$.

Note that for each function $\varphi \in E$ the convolution $u*\widetilde{\varphi}$ is a continuous function
with support in $[-\alpha, \alpha+\beta]$. Moreover, it follows from (\ref{l1_2b})  that $u*\widetilde{\varphi}$ vanishes on the interval $[0, \beta]$.
Hence, we can write
\begin{equation}
\label{DA} u*\widetilde{\varphi}=\tau_{-\alpha}a_1(\varphi)+\tau_{\beta}a_2(\varphi)
\end{equation}
where $a_1(\varphi), a_2(\varphi)$ are continuous functions with supports contained in the interval $[0,
\alpha]$.

Fix some $\varphi,\psi \in E$. Since $(u*\widetilde{\varphi})* \widetilde {{\psi}}=(u*\widetilde {\psi})* \widetilde
{{\varphi}}$, then
\begin{equation}
\label{RPP}
\tau_{-\alpha}a_1(\varphi)*\widetilde{\psi}+\tau_{\beta}a_2(\varphi)*\widetilde{\psi}=
\tau_{-\alpha}a_1(\psi)*\widetilde{\varphi}+\tau_{\beta}a_2(\psi)*\widetilde{\varphi}.
\end{equation}
From the Titchmarsh theorem we get
$$
\Gamma(\tau_{-\alpha}a_1(\varphi)*\widetilde{\psi})=-\alpha+\Gamma(a_1(\varphi))-\Lambda(\psi)
\leq 0
$$
and
$$
\Lambda(\tau_{\beta}a_2(\varphi)*\widetilde{\psi})=\beta+\Lambda(a_2(\varphi))-\Gamma(\psi)
\geq \beta-\alpha.
$$
Because the same inequalities hold for both components of the right-hand side of the equality (\ref{RPP}), then the equalities
$$
a_1(\varphi)*\widetilde{\psi}=a_1(\psi)*\widetilde{\varphi} \quad \text{and} \quad
a_2(\psi)*\widetilde{\varphi}=a_2(\varphi)*\widetilde{\psi}
$$
hold for any functions $\varphi, \psi \in E$. Hence
$$
\frac {a_1(\varphi)}  {\widetilde{\varphi}}= \frac {a_1(\psi)} {\widetilde{\psi}} 
\quad \text{and} \quad
\frac {a_2(\varphi)}  {\widetilde{\varphi}} = \frac {a_2(\psi)} {\widetilde{\psi}}
$$
(as Mikusi\'nski operators). Thus, the operators
$$
\xi_1=\frac {a_1(\varphi)*\widetilde{\varphi_0}} {\widetilde{\varphi}} 
\quad \text{and} \quad
\xi_2=\frac {a_2(\varphi)*\widetilde{\varphi_0}} {\widetilde{\varphi}} 
$$
do not depend on $\varphi \in E \setminus \{0\} $ and we have
\begin{equation}
\label{l1_5} u*\widetilde{\varphi_0}={\tau}_{-\alpha}\xi_1+\tau_{\beta}\xi_2,
\end{equation}
by (\ref {DA}).
Note that the operators $\xi_1$ and $\xi_2$ are regular in view of $\VE \subset V(\varphi_0)$ and Corollary 1 in \cite {BUR2}. Moreover,
$$
\Gamma(\xi_1) \leq \alpha+\Lambda(\varphi) \quad \text{and}\quad \Lambda(\xi_2) \geq
-\Gamma(\varphi_0)+\Gamma(\varphi)
$$
for each $\varphi \in E$. Hence, from (\ref{ZAP}), we obtain
$$
\Gamma(\xi_1) \leq \alpha \quad \text{and} \quad \Lambda(\xi_2)>-\beta
$$
and therefore
$$\Gamma(\tau_{-\alpha}\xi_1) \leq 0 \quad \text{and} \quad \Lambda(\tau_{\beta}\xi_2)>0.
$$
Hence, by (\ref{l1_5}), $u*\widetilde{\varphi_0}$ is a continuous function that vanishes on some interval $(0,
\ep)$. In particular, $u*\widetilde{\varphi_0}(0)=0$, which contradicts (\ref {l1_2}).
\end{proof}

\begin{corollary}
\label{WZMT} Let $E$ be a subset of $\mathcal{L}$ such that
$$
a_0=\inf \{ { \Lambda } (\varphi): {\varphi} \in E \}
\quad \text{and} \quad b_0=\sup \{ {\Gamma} (\varphi):
{\varphi} \in E \}
$$
are finite. Suppose the numbers $a<a_0$ and $b>b_0$ are such that $b-a \geq 2(b_0-a_0)$ and let
$$F=\lin\{\tau_\lambda \varphi:\ \varphi \in E, \ \lambda \in [a-a_0, b-b_0]\}.$$
Then, for each function $\varphi \in \mathcal{L}$ such that $\supp(\varphi) \subset [a, b)$, we have
$$
\varphi \in \cl(F) \quad \text{if and only if} \quad V(E) \subset V(\varphi).
$$
\end{corollary}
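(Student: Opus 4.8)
\medskip
\noindent\textbf{Proof outline.}
The plan is to deduce the corollary from Theorem~\ref{l1} by translating the problem so that the left edge of $E$ sits at the origin. Set $\alpha_0 := b_0 - a_0 \ge 0$ and $\beta := (b - a) - \alpha_0$; observe that the standing hypothesis $b - a \ge 2(b_0 - a_0)$ is exactly the inequality $\beta \ge \alpha_0$, and that $\alpha_0 + \beta = b - a$. Let $E' := \{\tau_{-a_0}\varphi : \varphi \in E\}$. Since $\Lambda(\tau_{-a_0}\varphi) = \Lambda(\varphi) - a_0$ and $\Gamma(\tau_{-a_0}\varphi) = \Gamma(\varphi) - a_0$, the set $E'$ satisfies $\inf\{\Lambda(\psi):\psi \in E'\} = 0$ and $\sup\{\Gamma(\psi):\psi\in E'\} = \alpha_0$, i.e. hypothesis (\ref{ZAP}) with $\alpha_0$ in the role of $\alpha$. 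Hence Theorem~\ref{l1} applies to $E'$ and $\beta$ (legitimate because $\beta \ge \alpha_0$): with $F' := \lin\{\tau_\mu\psi : \psi \in E',\ \mu \in [0,\beta]\}$, any $\chi \in \L$ with $\supp(\chi) \subset [0,\alpha_0+\beta) = [0, b-a)$ lies in $\cl(F')$ if and only if $V(E') \subset V(\chi)$.

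Next I would identify $F$ with a translate of $F'$. Writing a generic element of $E'$ as $\psi = \tau_{-a_0}\varphi$ with $\varphi \in E$, we have $\tau_a\tau_\mu\psi = \tau_{a - a_0 + \mu}\varphi$, and as $\mu$ ranges over $[0,\beta]$ the exponent $a - a_0 + \mu$ ranges over $[a - a_0,\, b - b_0]$; hence $\tau_a F' = F$. Because $\tau_a$ maps each Banach space $\L_n$ isometrically into some $\L_m$ with $m \ge n+|a|$, it is continuous on $\L$, and since $\tau_{-a}$ is its inverse, $\tau_a$ is a homeomorphism of $\L$ onto itself. Therefore $\cl(F) = \tau_a\,\cl(F')$, so for $\varphi \in \L$ with $\supp(\varphi) \subset [a,b)$ we get $\varphi \in \cl(F)$ iff $\tau_{-a}\varphi \in \cl(F')$; and since $\supp(\tau_{-a}\varphi) \subset [0, b-a)$, the equivalence from the previous paragraph applies with $\chi = \tau_{-a}\varphi$, giving $\varphi \in \cl(F) \iff V(E') \subset V(\tau_{-a}\varphi)$.

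Finally I would remove the translations from the $V$-condition. From $\widehat{\tau_c\varphi}(z) = e^{icz}\widehat{\varphi}(z)$ and the fact that $e^{icz}$ is a zero-free entire function, multiplying by it changes neither the zeros of $\widehat\varphi$ nor their multiplicities; hence $V(\tau_c\varphi) = V(\varphi)$ for every $c\in\R$. Consequently $V(E') = \bigcap_{\varphi\in E}V(\tau_{-a_0}\varphi) = \bigcap_{\varphi\in E}V(\varphi) = V(E)$ and $V(\tau_{-a}\varphi) = V(\varphi)$, so the last equivalence reads $\varphi \in \cl(F) \iff V(E) \subset V(\varphi)$, which is the assertion.

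I do not expect a genuine obstacle here: the work is essentially bookkeeping. The two points that need care are, first, that the interval arithmetic goes through, namely that $b - a \ge 2(b_0 - a_0)$ is precisely what makes Theorem~\ref{l1} applicable after the shift and that the half-open support window $[0,\alpha_0+\beta)$ of Theorem~\ref{l1} lines up with $[a,b)$; and second, that translation is a topological isomorphism of the strict inductive limit $\L$, which is what lets $\cl$ commute with $\tau_a$. (Alternatively one could simply repeat the Hahn--Banach argument of Theorem~\ref{l1} verbatim on the interval $[a,b)$, but the reduction above avoids doing so.)
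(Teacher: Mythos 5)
Your proof is correct and follows essentially the same route as the paper: translate $E$ by $-a_0$ (and the test function by $-a$) and invoke Theorem~\ref{l1}. Your bookkeeping is in fact slightly more careful than the paper's --- you take $\beta=(b-a)-(b_0-a_0)$ so that $\tau_a F'=F$ exactly (the paper shifts over $[0,b-a]$, for which the translated span properly contains $F$ unless $a_0=b_0$), and you explicitly verify both directions of the equivalence using the translation invariance of $V$ and the fact that translation is a homeomorphism of $\L$.
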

\begin{proof}
Let
$$
E_0=\{\tau_{-a_0} \varphi: \varphi \in E\} \quad \text{and} \quad F_0={\rm span} \{\tau_{\lambda} \varphi: \lambda \in [0, b-a], \ \varphi \in E_0\}.
$$
Then
\begin{equation}
\label{WTP} \inf \{\Lambda(\varphi):\ \varphi \in E_0\}=0 \quad \text{and} \quad
\sup\{\Gamma(\varphi): \ \varphi \in E_0\}=b_0-a_0.
\end{equation}
Suppose that $\varphi \in \mathcal{L} $ is any function such that $\supp(\varphi) \subset [a, b)$
and $V(E) \subset V(\varphi)$. Then $\supp(\tau_{-a}\varphi) \subset [0, b-a)$, and, by Theorem \ref{l1} and (\ref{WTP}), we get $\tau_{-a}\varphi \in \cl(F_0)$, which is equivalent to $\varphi \in \cl(F)$.
\end{proof}

\begin{corollary}\label{cor}
Under the assumptions of Theorem \ref{l1}, if $Z(E)=\emptyset$, then
\begin{equation}
\label{ASNP} \cl(F)=\mathcal{L}_{[0, \alpha+\beta]}.
\end{equation}
Moreover, under the assumption of Corollary \ref{WZMT}, we have
\begin{equation}
\label{ASNPD}
\cl(F)=\mathcal{L}_{[a, b]}.
\end{equation}
\end{corollary}
\begin{proof}
It suffices to note that $Z(E)=\emptyset$ if and only if $V(E)=\emptyset$. Then \eqref{ASNP} follows from Theorem \ref{l1} and \eqref{ASNPD} follows from Corollary \ref{WZMT}.
\end{proof}

\bigskip

\section{Shift invariant spaces of inductive limit}

\begin{theorem}
\label{CDPN}
Let $E$ be a shift-invariant subspace of  $\mathcal{L}$. Then
$$
\lambda(E)=\{\varphi \in \mathcal{L}: V(E_{\alpha}) \subset V(\varphi) \ \text{for some } \ \alpha>0\}.
$$
\end{theorem}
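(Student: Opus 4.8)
The plan is to prove the two inclusions separately; the inclusion ``$\subseteq$'' is a soft continuity argument, while ``$\supseteq$'' is where the content of Corollary \ref{WZMT} enters. Throughout I would write $E_\alpha=E\cap\mathcal{L}_\alpha$ and use two standard facts about the strict inductive limit $\mathcal{L}$ of the Banach spaces $(\mathcal{L}_n,\|\cdot\|)$: first, a sequence converges in $\mathcal{L}$ if and only if it is contained in some step $\mathcal{L}_N$ and converges there in norm; second, for compact $K$ the subspace $\mathcal{L}_K$ is closed in $\mathcal{L}$ and inherits from $\mathcal{L}$ exactly its own norm topology.

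For ``$\lambda(E)\subseteq\{\varphi:V(E_\alpha)\subset V(\varphi)\text{ for some }\alpha>0\}$'': given $\varphi_n\in E$ with $\varphi_n\to\varphi$ in $\mathcal{L}$, I would choose $N$ so that $\varphi,\varphi_n\in\mathcal{L}_N$ and $\|\varphi_n-\varphi\|\to0$, put $\alpha=N$ so that $\varphi_n\in E_\alpha$ eventually, and invoke the elementary bound $|\widehat{\psi}^{(k)}(z_0)|\le\alpha^k e^{\alpha|\mathrm{Im}\,z_0|}\,\|\psi\|$, valid for $\psi\in\mathcal{L}_\alpha$. It shows $\widehat{\varphi_n}^{(k)}(z_0)\to\widehat{\varphi}^{(k)}(z_0)$, so every vanishing condition built into $V(E_\alpha)$ passes to $\varphi$, i.e. $V(E_\alpha)\subset V(\varphi)$.

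For the reverse inclusion, assume $V(E_\alpha)\subset V(\varphi)$ for some $\alpha>0$; the case $\varphi=0$ is trivial, and otherwise $E_\alpha$ must contain a nonzero function (were $E_\alpha=\{0\}$ we would have $V(E_\alpha)=\C\times\N$, forcing $\varphi=0$), so that $a_0=\inf\{\Lambda(\psi):\psi\in E_\alpha\}$ and $b_0=\sup\{\Gamma(\psi):\psi\in E_\alpha\}$ are finite. I would then pick real numbers $c<a_0$ and $d>b_0$ with $d-c\ge 2(b_0-a_0)$ and $\supp(\varphi)\subset[c,d)$ --- possible because $\supp(\varphi)$ is a fixed bounded set and the interval may be taken as large as needed --- and set $F=\lin\{\tau_\lambda\psi:\psi\in E_\alpha,\ \lambda\in[c-a_0,d-b_0]\}$. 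Corollary \ref{WZMT}, applied with $E_\alpha$ in the role of $E$, gives $\varphi\in\cl(F)$; and since $E$ is a shift-invariant subspace containing $E_\alpha$, we have $F\subset E$.

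Finally I would convert ``$\varphi\in\cl(F)$'' into ``$\varphi\in\lambda(E)$''. All of the generators $\tau_\lambda\psi$ of $F$ have support in the single compact interval $K=[c-a_0-\alpha,\ d-b_0+\alpha]$, so $F\subset\mathcal{L}_K$; as $\mathcal{L}_K$ is closed in $\mathcal{L}$, also $\cl(F)\subset\mathcal{L}_K$, and on $\mathcal{L}_K$ the topology of $\mathcal{L}$ is simply the Banach norm topology. Hence $\varphi$ is the $\|\cdot\|$-limit of some $\varphi_n\in F\subset E$, and such a sequence converges to $\varphi$ in $\mathcal{L}$, so $\varphi\in\lambda(E)$. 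I expect this last step to be the real obstacle: in a strict inductive limit, topological and sequential closure genuinely differ (the paper itself later exhibits a topologically but not sequentially dense shift-invariant subspace), so the key is that the entire construction can be confined to one bounded-support band $\mathcal{L}_K$, where the two closures coincide; by comparison, checking that Corollary \ref{WZMT} applies --- finiteness of $a_0,b_0$ and the existence of a suitable interval $[c,d)$ --- is routine.
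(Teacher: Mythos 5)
Your proposal is correct, and it is essentially the argument the paper intends: the paper states Theorem \ref{CDPN} without printing a proof, but your two inclusions --- the ``$\subseteq$'' via containment of a convergent sequence in a single step $\mathcal{L}_N$ and continuity of $\widehat{\psi}\mapsto\widehat{\psi}^{(k)}(z_0)$ there, and the ``$\supseteq$'' via Corollary \ref{WZMT} applied to $E_\alpha$ together with the observation that everything stays in one band $\mathcal{L}_K$, where the induced topology is the norm topology and hence topological closure yields a convergent sequence --- are exactly the mechanism the paper uses in the analogous proofs of Lemma \ref{JPW} and Theorem \ref{THOD}. Your explicit treatment of the degenerate cases ($\varphi=0$, and $E_\alpha=\{0\}$ forcing $\varphi=0$) and of the finiteness of $a_0,b_0$ is a sound completion of details the paper leaves implicit.
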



\begin{theorem}
\label{tw1} Let $E$ be a shift-invariant subspace of $\mathcal{L}$. The following conditions are
equivalent:
\begin{itemize}
\item [{\rm (a)}] $E$ is sequentially dense in $\mathcal{L}$;
\item [{\rm (b)}] For every $\alpha>0$ there exists $\beta>0$ such that $\mathcal{L}_{\alpha} \subset \cl(E_{\beta})$;
\item [{\rm (c)}] There is a weak delta-sequence $({\delta}_n)$ in $E;$
\item [{\rm (d)}] There is a $\alpha >0$ such that $Z(E_{\alpha})=\emptyset$.
\end{itemize}
\end{theorem}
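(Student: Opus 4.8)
The plan is to establish all four equivalences through the implications
$(a)\Rightarrow(d)$, $(d)\Rightarrow(a)$, $(d)\Rightarrow(b)$, $(b)\Rightarrow(c)$, $(c)\Rightarrow(d)$; these yield $(a)\Leftrightarrow(d)$ and $(b)\Leftrightarrow(c)\Leftrightarrow(d)$, hence the full equivalence. Throughout I would write $E_\gamma=E\cap\L_\gamma$ and use that $Z(A)=\es$ is equivalent to $V(A)=\es$, that $\gamma\mapsto V(E_\gamma)$ is non-increasing, and that $Z$ and $V$ are translation invariant (because $\widehat{\tau_\lambda\varphi}=e^{i\lambda z}\widehat\varphi$ with $e^{i\lambda z}$ zero-free). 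I would also note once that each $\L_\gamma$ is a closed subspace of $\L$ on which the inductive limit topology coincides with the norm topology, so that for $A\subset E\cap\L_\gamma$ the set $\cl(A)$ is the $\|\cdot\|$-closure of $A$ inside the Banach space $\L_\gamma$; in particular $\cl(A)=\lambda(A)\subset\lambda(E)$. With these remarks, $(d)\Rightarrow(a)$ is immediate: if $Z(E_{\alpha_0})=\es$, then $V(E_{\alpha_0})=\es\subset V(\varphi)$ for every $\varphi\in\L$, so Theorem \ref{CDPN} gives $\lambda(E)=\L$.

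The implication $(a)\Rightarrow(d)$ is the conceptual heart, and I expect it to be the main obstacle: one must upgrade a purely sequential density hypothesis to the uniform statement $Z(E_\alpha)=\es$, the difficulty being that the decreasing family $Z(E_\alpha)\subset\C$ need neither stabilize nor lie in a compact set, so one cannot simply intersect over $\alpha$. My plan is a Baire-category argument via Theorem \ref{CDPN}. Assume $Z(E_n)\neq\es$ for all $n\in\N$ and pick $z_n\in Z(E_n)$. For each $n$ the set $\{\varphi\in\L_1:\widehat\varphi(z_n)=0\}$ is the kernel of the functional $\varphi\mapsto\int_{-1}^{1}e^{iz_nt}\varphi(t)\,dt$, which is continuous on the Banach space $\L_1$ and nonzero (its $L^\infty$ kernel $e^{iz_nt}$ never vanishes), hence a closed nowhere dense subspace. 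By the Baire category theorem there is $\varphi_0\in\L_1$ with $\widehat{\varphi}_0(z_n)\neq0$ for every $n$. For arbitrary $\alpha>0$, with $n=\lceil\alpha\rceil$ we get $(z_n,0)\in V(E_n)\subset V(E_\alpha)$ but $(z_n,0)\notin V(\varphi_0)$, so $V(E_\alpha)\not\subset V(\varphi_0)$; thus $\varphi_0\notin\lambda(E)$ by Theorem \ref{CDPN}, contradicting $(a)$.

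For $(d)\Rightarrow(b)$ I would reduce to Corollary \ref{cor} after normalizing $E$. Fix $\alpha_0$ with $Z(E_{\alpha_0})=\es$; then $E_{\alpha_0}\neq\{0\}$, and I put $E'=\{\tau_{-\Lambda(\varphi)}\varphi:\varphi\in E_{\alpha_0}\setminus\{0\}\}$. Each $\psi\in E'$ lies in $E$, has $\Lambda(\psi)=0$ and support in $[0,2\alpha_0]$, and by translation invariance $Z(E')=Z(E_{\alpha_0})=\es$; moreover $\alpha':=\sup\{\Gamma(\psi):\psi\in E'\}\in(0,2\alpha_0]$. Given any $\alpha>0$, set $\beta'=\max(\alpha',2\alpha)$ and apply Corollary \ref{cor} to $E'$ with parameters $\alpha'$ and $\beta'$, obtaining $\cl(F')=\L_{[0,\alpha'+\beta']}$ where $F'=\lin\{\tau_\lambda\psi:\psi\in E',\ \lambda\in[0,\beta']\}$. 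Since $F'\subset E_{2\alpha_0+\beta'}$ and $2\alpha\le\alpha'+\beta'$, every $\chi\in\L_\alpha$ satisfies $\tau_\alpha\chi\in\L_{[0,\alpha'+\beta']}=\cl(F')\subset\cl(E_{2\alpha_0+\beta'})$; applying the continuous operator $\tau_{-\alpha}$ and shift invariance of $E$ gives $\chi\in\cl(E_\beta)$ with $\beta=2\alpha_0+\beta'+\alpha$, depending only on $\alpha$ --- which is $(b)$. For $(b)\Rightarrow(c)$ I would take a classical nonnegative mollifier $(\eta_n)$ with $\supp(\eta_n)\subset[-1/n,1/n]$ and $\int\eta_n=1$; using $(b)$ with $\alpha=1$ to get $\L_1\subset\cl(E_\beta)$, I would pick $\delta_n\in E_\beta$ with $\|\delta_n-\eta_n\|<1/n$. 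Then $(\delta_n)\subset E$ has supports in the fixed interval $[-\beta,\beta]$, satisfies $\|\delta_n\|<2$, $\int\delta_n\to1$, and $\int_{|t|>\ep}|\delta_n|\le\int_{|t|>\ep}|\eta_n|+\|\delta_n-\eta_n\|\to0$, so it is a weak delta-sequence.

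Finally, for $(c)\Rightarrow(d)$: given a weak delta-sequence $(\delta_n)\subset E$ with supports in $[-\alpha,\alpha]$, I would fix $z_0\in\C$, write $\widehat{\delta}_n(z_0)-1=\int_{|t|\le\alpha}(e^{iz_0t}-1)\delta_n(t)\,dt+\bigl(\int\delta_n-1\bigr)$, split the first integral at $|t|=\ep$, and use the bound $|e^{iz_0t}-1|\le|z_0|\,|t|\,e^{|z_0|\alpha}$ together with conditions (ii)--(iv) to get $\limsup_n|\widehat{\delta}_n(z_0)-1|\le C\ep$ for a constant $C$ depending on $z_0$; letting $\ep\to0$ gives $\widehat{\delta}_n(z_0)\to1$. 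Hence $\widehat{\delta}_n(z_0)\neq0$ for large $n$, so $z_0\notin Z(\delta_n)\supset Z(E_\alpha)$, and since $z_0$ was arbitrary $Z(E_\alpha)=\es$. Apart from the Baire step in $(a)\Rightarrow(d)$, the only real subtlety is the translation bookkeeping in $(d)\Rightarrow(b)$ needed to make the hypotheses of Theorem \ref{l1} and Corollary \ref{cor} apply verbatim; everything else is routine.
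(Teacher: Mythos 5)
Your proof is correct, but it is organized differently from the paper's. The paper runs the single cycle (a) $\Rightarrow$ (b) $\Rightarrow$ (c) $\Rightarrow$ (d) $\Rightarrow$ (a): its only substantial step is (a) $\Rightarrow$ (b), where sequential density gives $\L_{\alpha}=\bigcup_{k}\bigl(\cl(E_k)\cap\L_{\alpha}\bigr)$ (a convergent sequence in the strict inductive limit sits in some $\L_k$), and the Baire category theorem applied to this covering of the Banach space $\L_{\alpha}$ by closed subspaces yields $\L_{\alpha}\subset\cl(E_k)$ for some $k$; the steps (b) $\Rightarrow$ (c) $\Rightarrow$ (d) are the same easy observations you spell out, and (d) $\Rightarrow$ (a) is, as in your argument, immediate from Theorem \ref{CDPN}. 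You use Baire in a different place and in a different way: to produce one function $\varphi_0\in\L_1$ whose Fourier transform avoids a chosen $z_n\in Z(E_n)$ for every $n$ (avoidance of countably many closed hyperplanes in $\L_1$), which together with Theorem \ref{CDPN} gives (a) $\Rightarrow$ (d) directly; and you then recover (b) from (d) by translating $E_{\alpha_0}$ so that $\Lambda=0$ and invoking Corollary \ref{cor}, i.e.\ the Section 2 machinery that the paper's proof of this theorem does not use at all. Your supporting details are sound: the hyperplanes are kernels of nonzero continuous functionals, hence nowhere dense; the convention $(z,0)\in V(\varphi)$ for $\widehat{\varphi}(z)=0$ is exactly the one the paper itself needs for Corollary \ref{cor}; each $\L_\gamma$ is closed in $\L$ and carries the norm topology as subspace topology, which justifies picking $\delta_n\in E_\beta$ with $\|\delta_n-\eta_n\|<1/n$; and the normalization $0<\alpha'\le 2\alpha_0$, $\beta'=\max(\alpha',2\alpha)$ makes Theorem \ref{l1} applicable verbatim. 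The trade-off: the paper's route is more economical (one Baire argument, no appeal to Theorem \ref{l1}), while yours gives a self-contained proof of the key upgrade (a) $\Rightarrow$ (d) --- failure of (d) yields an explicit function supported in $[-1,1]$ outside $\lambda(E)$ --- and an explicit, essentially linear bound for $\beta$ in (b) in terms of $\alpha$ and $\alpha_0$.
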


\begin{proof} Assume that (a) holds and fix $\alpha>0$.
We have
$$
\mathcal{L}_{\alpha}=\bigcup_{k=1}^{\infty}\left(\cl(E_k) \cap \mathcal{L}_{\alpha}
\right).
$$
For each $k \in \N$ the set $\cl(E_k) \cap \mathcal{L}_{\alpha}$  is a closed
subspace of $\mathcal{L}_{\alpha}$. By the Baire category theorem (see \cite{RUDIN}), there is a $k \in \N$ such that
$\cl(E_k) \cap \mathcal{L}_{\alpha}$ contains an open subset of
$\mathcal{L}_{\alpha}$. Consequently, $\mathcal{L}_{\alpha} \subset \cl (E_k)$.

Obviously, (b) implies (c) and because $\widehat{\delta_n}(z) \to 1$ for any complex number $z$, then (c) implies (d).

Finally from Theorem \ref{CDPN} follows implication (d) $\Rightarrow$ (a).
\end{proof}

\begin{lemma}
\label{JPW} Suppose that $E$ is shift invariant subspace of $\mathcal{L}$. If $E_{\alpha}
\neq \{0\}$ for some $\alpha>0$ then
$$
V((\lambda E)_{\alpha})=V(E).
$$
\end{lemma}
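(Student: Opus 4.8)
The plan is to prove the two inclusions $V((\lambda E)_\alpha) \subset V(E)$ and $V(E) \subset V((\lambda E)_\alpha)$ separately. The first inclusion is the substantive one; the second is essentially a monotonicity observation together with a compatibility check between the ambient space $E$ and its various truncations $E_\alpha$.

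First I would dispose of the inclusion $V(E) \subset V((\lambda E)_\alpha)$. A generic element of $(\lambda E)_\alpha$ is a function $\varphi \in \mathcal{L}_\alpha$ that is the limit in $\mathcal{L}$ of a sequence $\varphi_n \in E$. Since convergence in the strict inductive limit $\mathcal{L}$ forces the $\varphi_n$ eventually into a common $\mathcal{L}_N$ with $\|\varphi_n - \varphi\| \to 0$, the Fourier transforms converge locally uniformly on $\mathbb{C}$ together with all derivatives (this is the standard fact that $\|\varphi_n-\varphi\|\to 0$ gives $\widehat{\varphi_n}^{(k)} \to \widehat{\varphi}^{(k)}$ uniformly on compacta, since differentiating under the integral only multiplies by a polynomial in $t$ which is bounded on a fixed support). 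Hence any $(z_0,k)$ lying in $V(\psi)$ for every $\psi \in E$ also lies in $V(\varphi)$, because $\widehat{\varphi}^{(j)}(z_0) = \lim_n \widehat{\varphi_n}^{(j)}(z_0) = 0$ for $j = 0,\dots,k$. Intersecting over all such $\varphi$ gives $V(E) \subset V((\lambda E)_\alpha)$.

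For the reverse inclusion $V((\lambda E)_\alpha) \subset V(E)$, the key point is that $E_\alpha \subset (\lambda E)_\alpha$ (since $E \subset \lambda E$ and truncation to $\mathcal{L}_\alpha$ is monotone), so it suffices to show $V(E_\alpha) \subset V(E)$; indeed then $V((\lambda E)_\alpha) \subset V(E_\alpha) \subset V(E)$. To prove $V(E_\alpha) \subset V(E)$, fix an arbitrary $\psi \in E$; I must show that $V(E_\alpha) \subset V(\psi)$, i.e.\ that every $(z_0,k)$ annihilated together with its first $k$ derivatives by all of $\widehat{E_\alpha}$ is also annihilated by $\widehat\psi$ up to order $k$. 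Here I would exploit shift-invariance: pick any nonzero $\varphi \in E_\alpha$ (which exists by hypothesis $E_\alpha \neq \{0\}$); then for every $\lambda \in \mathbb{R}$ the convolution-type combination, or more directly the function $\tau_\lambda(\varphi * \widetilde{?})$ — wait, better: consider $\varphi * \psi$ is not obviously in $E$, but $\tau_\lambda \varphi \in E$ and also $\tau_\lambda \psi \in E$, and the product $\widehat{\varphi}\,\widehat{\psi}$ suggests working with $\varphi * \psi \in \mathcal{L}$, which for large negative shift $\tau_{-N}(\varphi*\psi)$ has support in a bounded set but need not lie in $E$. The clean route is: since $E$ is a shift-invariant \emph{subspace}, and $\varphi \in E_\alpha$ is nonzero, the Mikusiński-operator quotient argument of Proposition \ref{C1} applies — if $(z_0,k) \in V(\varphi)$ then $\psi_k = (-1)^k i^k \varphi/(s+iz_0)^k \in \mathcal{L}_{[a,b]}$ — but what I actually need is to run $\psi$ against $\varphi$.

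The cleanest argument, which I expect to be the intended one: take any $\psi \in E$ and any $(z_0,k) \in V(E_\alpha)$. Choose $\varphi \in E_\alpha \setminus\{0\}$. Then $\varphi * \psi \in \mathcal{L}$ and, because $E$ is shift-invariant and $\varphi$ has support in $[-\alpha,\alpha]$, one can approximate $\varphi * \psi$ in $\mathcal{L}$ by finite linear combinations $\sum c_j \tau_{\lambda_j}\psi$ (Riemann-sum approximation of the convolution integral $\varphi*\psi = \int \varphi(\lambda)\,\tau_\lambda\psi\,d\lambda$, valid in the $\|\cdot\|$-norm on the fixed support), so $\varphi * \psi \in \lambda(E)$; and since its support is bounded, $\varphi*\psi \in (\lambda E)_\beta$ for a suitable $\beta$, hence after a shift into $\mathcal{L}_\alpha$... — this shift is the technical nuisance. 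Alternatively, and more robustly: since $(z_0,k)\in V(\varphi)$, Proposition \ref{C1} gives that $\varphi/(z_0\text{-factor})$ stays in $\mathcal{L}_\alpha$, so $V(E_\alpha)$ is stable under dividing out roots; combined with $Z$-type intersection arguments as in Corollary \ref{cor}, one gets that $\widehat{E_\alpha}$ and $\widehat{E}$ have the same common zeros with multiplicity. I would present the Riemann-sum version: $\varphi * \psi = \lim_n \sum_j \varphi(\lambda_j^{(n)})\,\Delta_j^{(n)}\,\tau_{\lambda_j^{(n)}}\psi$ in the norm of $\mathcal{L}_{N}$ for fixed $N$, hence $\varphi*\psi \in \lambda(E)$; its support is bounded so there is $\beta$ with $\varphi*\psi \in (\lambda E)_\beta$; and since $\widehat{\varphi*\psi} = \widehat\varphi\,\widehat\psi$ vanishes to order $\geq k$ at $z_0$ (as $\widehat\varphi$ does, by $(z_0,k)\in V(\varphi)$), while on the other hand every element of $(\lambda E)_\beta$ — in particular every $\mathcal{L}_\alpha$-truncation thereof, which includes enough functions to separate the order of vanishing — must already vanish to order $k$ at $z_0$ by the first inclusion applied with $\beta$. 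Running this for all $\psi\in E$ and intersecting yields $V(E_\alpha)\subset V(E)$, completing the proof.

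\medskip

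\textbf{Where the difficulty lies.} The genuinely delicate step is transferring information from $E_\alpha$ up to all of $E$: one must produce, out of an arbitrary $\psi \in E$ and a fixed nonzero $\varphi \in E_\alpha$, a function in $(\lambda E)_\alpha$ whose Fourier transform "sees" the zero $(z_0,k)$ of $\widehat\varphi$ inside $\widehat\psi$. The convolution $\varphi*\psi$ achieves this at the level of Fourier transforms ($\widehat\varphi\widehat\psi$), and lies in $\lambda(E)$ by Riemann-sum approximation using shift-invariance, but it sits in $(\lambda E)_\beta$ for a possibly larger $\beta$, not in $(\lambda E)_\alpha$; reconciling this with the stated equality $V((\lambda E)_\alpha)=V(E)$ requires observing that $V((\lambda E)_\beta)$ is independent of $\beta$ once $\beta$ is large enough — which is exactly the monotonicity plus Proposition \ref{C1} mechanism (dividing out the root $(s+iz_0)$ keeps the support unchanged). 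So the real content is a support-stable root-extraction argument; once that is in place, both inclusions are short.
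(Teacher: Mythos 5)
Your easy direction $V(E)\subset V((\lambda E)_\alpha)$ is fine and agrees with what the paper treats as obvious, but the substantive inclusion is where the proposal breaks down, in two concrete places. First, the reduction ``it suffices to show $V(E_\alpha)\subset V(E)$'' replaces the lemma by a strictly stronger claim that is false in general: $E_\alpha\subset(\lambda E)_\alpha$ only gives $V((\lambda E)_\alpha)\subset V(E_\alpha)$, and $V(E_\alpha)$ can be strictly larger than $V(E)$. For example, take $E=\lin\{\tau_\lambda\varphi_1,\ \tau_\lambda\varphi_2:\ \lambda\in\R\}$ with $\varphi_1\in\D$ nonzero, $\supp\varphi_1\subset[-\alpha,\alpha]$, and $\varphi_2=\mathbf{1}_{[0,10]}$ with $10$ much larger than $2\alpha$, choosing a zero $z_0$ of $\widehat{\varphi_1}$ with $\widehat{\varphi_2}(z_0)\neq0$. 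Every element of $E$ has the form $\varphi_1*\mu+\varphi_2*\nu$ with $\mu,\nu$ finite discrete measures; if $\nu\neq0$, the term $\varphi_2*\nu$ has jump discontinuities at two points at distance at least $10$, which the smooth term $\varphi_1*\mu$ cannot cancel, so no such element lies in $\mathcal{L}_\alpha$. Hence $E_\alpha$ consists only of combinations of translates of $\varphi_1$, so $(z_0,0)\in V(E_\alpha)\setminus V(E)$. The whole point of the lemma is that the sequential closure $(\lambda E)_\alpha$ is much richer than $E_\alpha$, and only that enrichment forces $V$ down to $V(E)$. Second, your mechanism for transferring information to an arbitrary $\psi\in E$ runs backwards: $\widehat{\varphi*\psi}=\widehat\varphi\,\widehat\psi$ vanishes at $z_0$ simply because $\widehat\varphi$ does, so it can never certify that $\widehat\psi$ vanishes there; and the closing appeal to ``the first inclusion applied with $\beta$'' is circular, since that inclusion only concerns points of $V(E)$, while your $(z_0,k)$ is merely known to lie in $V(E_\alpha)$, which is exactly what is in question.

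What is actually needed (and what the paper does) is to manufacture a \emph{new} short-supported element of $\lambda(E)$ out of the long-supported elements of $E$. Arguing by contradiction, suppose $(z_0,n)\in V((\lambda E)_\alpha)\setminus V(E)$, let $m<n$ be the exact common order of vanishing of $E$ at $z_0$, and pick a nonzero $\varphi_0\in(\lambda E)_\alpha$ (it exists since $E_\alpha\neq\{0\}$). Proposition \ref{C1} lets you divide $\widehat{\varphi_0}$ by $(z-z_0)^{k-m}$ ($k$ the exact order of its zero at $z_0$) \emph{without enlarging the support}, giving $\varphi\in\mathcal{L}_\alpha$ that vanishes at $z_0$ to order exactly $m$. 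Choosing $\beta>\alpha$ large enough that $(z_0,m+1)\notin V(E_\beta)$ and $V(E_\beta)\subset V(\varphi)$, Corollary \ref{WZMT} places $\varphi$ in the closure of $\lin\{\tau_\lambda\psi:\ \psi\in E_\beta,\ \lambda\in[-\beta,\beta]\}$ inside a fixed step of the inductive limit, hence $\varphi\in(\lambda E)_\alpha$; but $\widehat{\varphi}^{(m)}(z_0)\neq0$ with $m<n$ contradicts $(z_0,n)\in V((\lambda E)_\alpha)$. Your final remarks about a ``support-stable root-extraction argument'' gesture toward this, but the proposal never carries it out, and its actual route (the false reduction plus the convolution step) does not prove the lemma.
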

\begin{proof}
The inclusion $V(E) \subset V((\lambda E)_{\alpha})$ is obvious. To prove
inverse inclusion suppose that $(z_0, n) \in V((\lambda E)_{\alpha}) \setminus V(E)$. Then there
exits function $\varphi_0 \in (\lambda E)_{\alpha}$ such that $(z_0, n) \in V(\varphi_0)$.

Let $m$ be a maximal natural number such that $(z_0, m) \in V(E)$ if such number exist, and
take $m=0$ in other case. Because of our assumption $m<n$.

Let $k$ be the order of zero of $\widehat{\varphi_0}$ at $z_0$ and $\varphi \in \mathcal{L}_{\alpha}$
be a function such that
$$
\widehat{\varphi}(z)=\frac {\widehat{\varphi_0}(z)} {(z-z_0)^{k-m}}
$$
Note, that from Proposition \ref{C1} such function exists and
$$
V(\varphi)=V(\varphi_0) \setminus \{(z_0, j):  j=m+1, \dots, k\}.
$$
Because $(z_0, m+1) \notin V(E)$, then there exists $\beta>\alpha$ such that $(z_0, m+1) \not \in
V(E_{\beta})$ and for such $\beta$ we have inclusion $V(E_{\beta}) \subset V(\varphi)$.

Let
$$H=\lin
\{\tau_{\lambda}\varphi: \varphi \in E_{\beta}, \ \lambda \in [-\beta, \beta]\}.
$$
Then $H \subset E_{\alpha+\beta}$ and by Corollary \ref{WZMT} $\varphi \in \cl(H)$ hence in
particular $\varphi \in \lambda(E)$. But $\supp(\varphi) \subset [-\alpha, \alpha]$, so $\varphi \in
(\lambda E)_{\alpha}$, so we get a contradiction with the assumption $(z_0, n) \in V((\lambda
E)_{\alpha})$, since $\widehat{\varphi}^{(m)}(z_0) \neq 0$ and $m<n$.
\end{proof}

\begin{theorem}
\label{THOD} Suppose that $E$ is a shift invariant subspace of $\mathcal{L}$. Then
$$
\cl(E)=\lambda^2(E)=\{\varphi \in \mathcal{L}:\ V(E) \subset V(\varphi)\}.
$$
\end{theorem}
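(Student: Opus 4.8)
The plan is to prove the stated equalities by closing the cyclic chain of inclusions
$$\lambda^2(E)\ \subset\ \cl(E)\ \subset\ W\ \subset\ \lambda^2(E),\qquad W:=\{\varphi\in\mathcal{L}:V(E)\subset V(\varphi)\},$$
after which the three sets named in the theorem all coincide with $W$.

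First I would dispose of the two easy inclusions. For each pair $(z,k)$ the functional $\varphi\mapsto\widehat{\varphi}^{(k)}(z)=\int(it)^ke^{izt}\varphi(t)\,dt$ is bounded on every Banach space $\mathcal{L}_n$, hence continuous on the inductive limit $\mathcal{L}$; so $W$ — an intersection of kernels of such functionals, one for each $(z,m)\in V(E)$ and each $k\le m$ — is a closed subspace of $\mathcal{L}$ containing $E$, and therefore $\cl(E)\subset W$. Also $\lambda(E)\subset\cl(E)$ always holds, and $\cl(E)$, being closed, is sequentially closed, so $\lambda^2(E)=\lambda(\lambda(E))\subset\cl(E)$.

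The substantive step is $W\subset\lambda^2(E)$. If $E=\{0\}$ then $V(E)=\C\times\N$, whence $W=\{0\}=\lambda^2(E)$; so I may assume $E\ne\{0\}$ and fix $\alpha_0>0$ with $E_{\alpha_0}\ne\{0\}$. Since each translation is continuous on $\mathcal{L}$, the sequential closure $\lambda(E)$ is again a shift-invariant subspace, so Theorem~\ref{CDPN} applies to it and gives
$$\lambda^2(E)=\{\varphi\in\mathcal{L}:V((\lambda E)_\alpha)\subset V(\varphi)\ \text{for some }\alpha>0\}.$$
On the other hand, Lemma~\ref{JPW} applied with $\alpha_0$ yields $V((\lambda E)_{\alpha_0})=V(E)$. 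Hence every $\varphi\in W$ satisfies $V((\lambda E)_{\alpha_0})=V(E)\subset V(\varphi)$, so $\varphi\in\lambda^2(E)$, which closes the chain.

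I expect no serious obstacle: the real work has already been done in Theorem~\ref{CDPN} and Lemma~\ref{JPW}, and what remains is bookkeeping — that $W$ is closed, that a closed set is sequentially closed (so $\lambda^2(E)\subset\cl(E)$), that $\lambda(E)$ inherits shift-invariance so Theorem~\ref{CDPN} may be reused on it, and that the particular $\alpha_0$ furnished by Lemma~\ref{JPW} suffices to witness the existential "$\text{for some }\alpha$" in that theorem. The one point worth flagging is the mild surprise encoded in the statement: although $\lambda(E)$ may be a proper subset of $\cl(E)$, taking the sequential closure twice already recovers the entire topological closure.
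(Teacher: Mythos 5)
Your argument is correct, and its skeleton matches the paper's: the inclusions $\lambda^2(E)\subset\cl(E)\subset W$ are dismissed as routine, and the substantive inclusion $W\subset\lambda^2(E)$ is reduced, via Lemma~\ref{JPW}, to the identity $V((\lambda E)_{\alpha})=V(E)$ together with a density statement for the shift-invariant subspace $\lambda(E)$. The difference lies in which density statement you invoke at the end. The paper takes $\varphi\in W$ with $\supp(\varphi)\subset[-\alpha,\alpha]$, arranges (enlarging $\alpha$ if necessary) that $E_\alpha\neq\{0\}$, and then applies Corollary~\ref{WZMT} to the set $\lambda(E)_{\alpha}$, concluding $\varphi\in\cl(\lambda(E)_{2\alpha+1})$; since this closure is taken inside a fixed Banach step $\mathcal{L}_{2\alpha+1}$, it is a sequential closure, whence $\varphi\in\lambda^2(E)$. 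You instead apply Theorem~\ref{CDPN} to the shift-invariant subspace $\lambda(E)$ and use the single witness $\alpha_0$ with $E_{\alpha_0}\neq\{0\}$ furnished by Lemma~\ref{JPW}. Both routes are legitimate (Theorem~\ref{CDPN} precedes Theorem~\ref{THOD}, so there is no circularity), and yours is a bit slicker: it avoids the support and translation-range bookkeeping and the implicit observation that closure within a Banach step is sequential. What the paper's route buys is self-containedness relative to the quantitative Section~2 result, Corollary~\ref{WZMT} — noteworthy here because Theorem~\ref{CDPN}, which you lean on, is stated in the paper without an included proof — plus the explicit localization $\varphi\in\cl(\lambda(E)_{2\alpha+1})$. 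Your side remarks (that $\lambda(E)$ is a shift-invariant subspace because translations, addition and scalar multiplication are sequentially continuous; that $W$ is an intersection of kernels of functionals continuous on each $\mathcal{L}_n$ and hence on the inductive limit; and the separate treatment of $E=\{0\}$) are all sound and fill in details the paper labels obvious.
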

\begin{proof}
Denote
$$
F=\{\varphi \in \mathcal{L}:\ V(E) \subset V(\varphi)\}.
$$
Because the inclusions $\lambda^2(E) \subset \cl E \subset F$ are obvious, it is enough to
prove inclusion $F \subset \lambda^2(E)$. To do this, take any function $\varphi \in \mathcal{L}$
and suppose that $\supp(\varphi) \subset [-\alpha, \alpha]$. By Lemma \ref{JPW} we can assume that
$V(E)=V(\lambda(E)_{\alpha})$. Because $\lambda(E)$ is a shift invariant subspace of
$\mathcal{L}$, then by Corollary \ref{WZMT} $\varphi \in
\cl(\lambda(E)_{2\alpha+1})$, hence $\varphi \in
\lambda^2(E)$.
\end{proof}

As a consequence of Lemma \ref{JPW} and Theorem \ref{THOD} we obtain the following characterization
of the dense subspaces of $\mathcal{L}:$
\begin{theorem}\label{tw2}
Let $E$ be a shift-invariant subspace of $\mathcal{L}$. Then the following conditions are
equivalent:
\begin{itemize}
\item [{\rm (a)}] $E$ is dense in $\mathcal{L};$
\item [{\rm (b)}] $Z(E)=\emptyset$;
\item [{\rm (c)}] $E \neq \{0\}$ and if for some $\alpha>0$ there exists a non-zero function $\varphi \in
\mathcal{L}_{\alpha}$ then $Z((\lambda(E)_{\alpha})=\emptyset$.
\end{itemize}
\end{theorem}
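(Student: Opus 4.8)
The plan is to deduce both equivalences directly from Theorem~\ref{THOD} and Lemma~\ref{JPW}, recalling (as already used in the proof of Corollary~\ref{cor}) the elementary fact that for any subset $A$ of $\mathcal{L}$ one has $Z(A)=\emptyset$ if and only if $V(A)=\emptyset$: indeed $(z,n)\in V(\psi)$ forces $(z,0)\in V(\psi)$, so $V(A)=\bigcap_{\psi\in A}V(\psi)$ is nonempty exactly when it contains a pair $(z,0)$, i.e.\ exactly when $Z(A)\neq\emptyset$.

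For (a) $\Leftrightarrow$ (b) I would apply Theorem~\ref{THOD}, which gives $\cl(E)=\{\varphi\in\mathcal{L}:V(E)\subseteq V(\varphi)\}$. If $Z(E)=\emptyset$, then $V(E)=\emptyset$ by the observation, so $\cl(E)=\mathcal{L}$ and $E$ is dense. Conversely, if $Z(E)\neq\emptyset$, fix $z_0\in Z(E)$ and let $\varphi_0$ be equal to $e^{-iz_0t}$ for $t\in[0,1]$ and to $0$ elsewhere; then $\varphi_0\in\mathcal{L}$ and $\widehat{\varphi_0}(z_0)=1\neq0$, so $(z_0,0)\in V(E)\setminus V(\varphi_0)$, whence $\varphi_0\notin\cl(E)$ and $E$ is not dense.

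For (b) $\Leftrightarrow$ (c) I would use Lemma~\ref{JPW}. First, $Z(E)=\emptyset$ forces $E\neq\{0\}$, since $Z(\{0\})=\mathbb{C}$. Next, for every $\alpha>0$ for which $E_\alpha\neq\{0\}$, Lemma~\ref{JPW} gives $V((\lambda E)_\alpha)=V(E)$, hence $Z((\lambda E)_\alpha)=\emptyset$ if and only if $Z(E)=\emptyset$. Thus, assuming (b), we get $Z((\lambda E)_\alpha)=\emptyset$ for every such $\alpha$, which is (c); and assuming (c), we pick any $\varphi\in E\setminus\{0\}$ and any $\alpha>0$ with $\varphi\in E_\alpha$, so that (c) yields $Z((\lambda E)_\alpha)=\emptyset$, which by Lemma~\ref{JPW} transfers back to $Z(E)=\emptyset$, i.e.\ (b).

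I do not anticipate a real obstacle, since the substantive work is already contained in Theorem~\ref{THOD} and Lemma~\ref{JPW}. The one point requiring care is the role of the parameter $\alpha$ in condition (c): the values of $\alpha$ one must treat are precisely those for which $E_\alpha\neq\{0\}$, which is exactly the hypothesis under which Lemma~\ref{JPW} applies and the identity $V((\lambda E)_\alpha)=V(E)$ is available; once that correspondence is pinned down, both implications reduce to short manipulations of this identity together with the equivalence $Z(A)=\emptyset\Leftrightarrow V(A)=\emptyset$.
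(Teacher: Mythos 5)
Your proposal is correct and takes essentially the same route the paper intends: Theorem~\ref{tw2} is stated there without a separate proof, precisely as an immediate consequence of Theorem~\ref{THOD} (giving (a)$\Leftrightarrow$(b) via $Z(A)=\emptyset\Leftrightarrow V(A)=\emptyset$) and Lemma~\ref{JPW} (giving (b)$\Leftrightarrow$(c)), which is exactly your argument. Your reading of (c) — quantifying over those $\alpha$ with $E_\alpha\neq\{0\}$, the printed ``$\varphi\in\mathcal{L}_\alpha$'' being evidently a misprint — is the intended one, and your explicit witness $\varphi_0(t)=e^{-iz_0t}$ on $[0,1]$ with $\widehat{\varphi_0}(z_0)=1$ is a perfectly good way to settle the non-density direction.
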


From Theorem \ref{tw2} easy follows a positive  solution of a problem of \'A. Sz\'az (\cite{SZA}) concerning the so-called total sets in the space $\mathcal{D}$ of smooth functions on the real-line.

\bigskip

We complement the preceding results by giving an example of a shift-invariant subspace of $\L$ which is topologically dense, but not sequentially dense.

\begin{example} For an arbitrary sequence of functions $\varphi_k\in \L$, the space
\begin{equation}
\label{ex1} E=\lin\{\tau_{\lambda}\varphi_k:   \lambda \in \R,  k \in \N \}
\end{equation}
is shift-invariant. If
\begin{equation}
\label{ex2} \bigcap_{k=1}^{\infty}Z(\varphi_k)=\es
\end{equation}
then, by Theorem \ref {tw2}, $E$ is topologically dense in $\L$. We are going to construct a
sequence $(\varphi_k)$ such that condition (\ref{ex2}) is satisfied, but $E$ is not sequentially
dense in $\L$.

Let $\D$  denote the space of smooth functions with bounded support. Assume that $\varphi \in \D$,
$\varphi(t) \not= 0  $ for $ |t|<1$ and $\varphi(t)=0 \ $ for $|t|>1$. Let $\ z_1,z_2,... \ $ be
zeros of $\widehat{\varphi}$ of orders $n_1,n_2,\dots $, respectively. Define
\[
\varphi_k=\left(\frac {\varphi}  {y_k}\right)^k,
\]
where $y_k=(s-iz_k)^{n_k}$, $k \in \N$, where the powers are understood in the sense of
Mikusi\'nski operators. We have $\varphi_k \in \D, \ \supp \varphi_k=[-k,k] \ $ and by Proposition
\ref{C1} $Z(\varphi_k)=Z(\varphi) \setminus \{z_k\} \ $ which implies that $Z(\{\varphi_k: k \in
\N\})=\es$. Therefore the sequence has the required properties, and $E$ defined in (\ref {ex1}) is
topologically dense in $\L$.

We claim that $E$ is not sequentially dense in $\mathcal{L}$. To prove this it suffices to show
that
\begin{equation}
\label{ex3}
E_k \subset E^k
\end{equation}
where $E^k={\rm lin} \{\tau_{\lambda} \varphi_r: \ \lambda \in \R, \ r=1,...,k \}$. This follows
from Theorem \ref {tw1}, because \eqref{ex3} implies $Z(E_k) \supset Z(E^k)=\{z_1,\dots ,z_k\}
\not=\es$ for all $k \in \N$.

To show (\ref{ex3}) assume that $ \psi \in E_k \cap E^m \ $
for some $m>k$. Then
\begin{equation}
\label{ex4}
\psi=\psi_1+\psi_2
\end{equation}
where $\psi_1 \in E^k$ and $\psi_2 \in \lin \{\tau_{\lambda} \varphi_r:  r=k+1,\dots ,m,  \lambda
\in \R \}$. Convolving both sides of (\ref {ex4}) with $\ y=y_1*\dots *y_m$ we obtain
\begin{equation}
\label{ex5} y*\psi=a_1*\varphi+a_2*\varphi^2+\dots+a_k*\varphi^k+\gamma*\varphi^k,
\end{equation}
where $\gamma \in \D$ and
\[
a_r \in \lin \{\tau_{\lambda}a: \lambda \in \R,
a=\nu_0+\nu_1s+\dots +\nu_ns^n,  \nu_0,\dots ,\nu_n \in \R,  n \in \N \}.
\]
In view of (\ref{ex5}),
\[
x_1=\frac {y*\psi} {\varphi}-a_1=a_2*\varphi+\dots +a_k*\varphi^{k-1}+\gamma*\varphi^{k-1}
\]
is a function such that $\supp x_1 \subset [-k+1,k-1]$.
Similarly,
\[
x_2=\frac {x_1} {\varphi}-a_2=a_3*\varphi+\dots +a_k*\varphi^{k-2}+\gamma*\varphi^{k-2}
\]
is a function such that $\supp x_2 \subset [-k+2,k+2]$. Repeating
this $k$ times we get $\supp \gamma \subset \{0\}$, so $\gamma=0.
\ $ Hence $y*\psi_2=0$. Consequently, $\psi_2=0$ and $\
\psi=\psi_1 \in E^k$. This implies (\ref{ex3}), which was to be
shown.
\end{example}

\end{document}